\documentclass[a4paper]{article}
\usepackage[utf8]{inputenc}

\usepackage[margin=1in]{geometry}

\usepackage{amsmath, amssymb}
\usepackage[hidelinks]{hyperref}

 \usepackage{xcolor}

\usepackage{amsthm}
\usepackage[noabbrev]{cleveref}
\newtheorem{thm}{Theorem}[section]
\newtheorem{lm}[thm]{Lemma}

\newtheorem{prop}[thm]{Proposition}

\theoremstyle{definition}

\newtheorem{rmk}[thm]{Remark}

\newcommand{\eps}{\varepsilon}
\renewcommand{\phi}{\varphi}

\newcommand{\RR}{\mathbb R}
\newcommand{\FF}{\mathbb F}
	\newcommand{\fq}{\FF_q}

\DeclareMathOperator{\rad}{rad}

\renewcommand{\leq}{\leqslant}
\renewcommand{\geq}{\geqslant}

\newcommand{\vspan}[1]{\left\langle #1 \right\rangle}

\newcommand{\sett}[2]{ \left\{ #1 \, \, || \, \, #2 \right \} }

\newcommand{\zero}{\mathbf 0}
\newcommand{\floor}[1]{\left \lfloor #1 \right \rfloor}

\newcommand{\ml}{\mathcal L}

\newcommand{\mo}{\mathcal O}
\renewcommand{\mp}{\mathcal P}

\newcommand{\ms}{\mathcal S}

 \DeclareMathOperator{\wt}{wt}

 \DeclareMathOperator{\PG}{PG}
    \newcommand{\pg}{\PG}
 \DeclareMathOperator{\AG}{AG}
    \newcommand{\ag}{\AG}

 \DeclareMathOperator{\proj}{proj}
 \DeclareMathOperator{\SRG}{SRG}
 \DeclareMathOperator{\Sp}{Sp}
 \DeclareMathOperator{\OO}{O}
 \DeclareMathOperator{\U}{U}

\makeatletter
\let\@fnsymbol\@arabic  
\makeatother

\title{On the weight distribution bound for the negative eigenvalue of polar collinearity graphs}
\author{Sam Adriaensen
\thanks{Department of Mathematics and Data Science, Vrije Universiteit Brussel, Pleinlaan 2, 1050 Elsene, Belgium.
\href{mailto:Sam.Adriaensen@vub.be}{\texttt{Sam.Adriaensen@VUB.be}}, \href{mailto:Jim.Wittebol@vub.be}{\texttt{Jim.Wittebol@VUB.be}}.}
\and Jim Wittebol \footnotemark[1]}
\date{}

\begin{document}

\maketitle

\begin{abstract}
 The weight distribution bound gives a lower bound on the weight of eigenvectors for eigenvalues of distance-regular graphs.
 In this note, we study when the weight distribution bound is tight for the negative eigenvalue of collinearity graphs of finite embedded polar spaces, and elliptic and hyperbolic affine polar graphs.
 We give a complete classification of eigenvectors meeting the weight distribution bound.
\end{abstract}

\noindent {\bf Keywords:} Strongly regular graphs, finite geometry, polar spaces, weight distribution bound.

\noindent {\bf MSC 2020:} 05B25, 05C50, 05E30, 51A50.

\section{Introduction}

The \emph{weight distribution bound} is a lower bound on the weight of eigenvectors of distance-regular graphs, that depends on the eigenvalue and the intersection numbers of the distance-regular graph.
It was introduced by Krotov, Mogilnykh, and Potapov \cite{Krotov}, see e.g.\ also the survey by Sotnikova and Valyuzhenich \cite{Sotnikova}.
Eigenvectors whose weights meet this bound are called \emph{optimal}.
In \cite[Theorem 1]{Evans}, Evans, Goryainov, and Shalaginov characterised optimal eigenvectors for the positive non-principal eigenvalue of collinearity graphs of finite classical polar spaces and for elliptic and hyperbolic affine polar graphs.

In this paper, we focus on the negative eigenvalue of the collinearity graph of finite embedded polar spaces.
Some results are already known.
Evans, Goryainov, and Shalaginov \cite[Proposition 5]{Evans} gave an example of an optimal eigenvector for the unitary space $\U(4,q)$.
De Bruyn, Goryainov, Haemers, and Shalaginov \cite[Theorem 8]{DeBruynEtAl} characterised the optimal eigenvectors for the symplectic space $\Sp(4,q)$.
In this note, we give a complete characterisation of all optimal eigenvectors for the negative eigenvalue of collinearity graphs of finite classical polar spaces.
We will disregard polar spaces arising from parabolic quadrics in even characteristic, since they are isomorphic to symplectic polar spaces.
This way, every finite classical polar space gives rise to a polarity of the ambient projective space.

\begin{thm}
 \label{Thm:Main:Coll}
 Consider a finite polar space $\Pi$ of rank $d$ and parameter $e$ embedded in $\pg(n-1,q)$, with polarity $\perp$.
 The collinearity graph $\Gamma(\Pi)$ of $\Pi$ has negative eigenvalue $\theta_2 = - (q^{d+e-2}+1)$.
 By the weight distribution bound, a $\theta_2$-eigenvector of $A_\Gamma$ has weight at least $2(q^{d+e-2}+1)$.
 A $\theta_2$-eigenvector $v$ meets this bound if and only if $n=2d$ and $v$ is of the form
 \[
  v: \mp \to \RR: P \mapsto \begin{cases}
   a & \text{if } P \in \pi \cap \mp, \\
   -a & \text{if } P \in \pi^\perp \cap \mp, \\
   0 & \text{otherwise,}
  \end{cases}
 \]
 where $\pi$ and $\pi^\perp$ are subspaces of vector dimension $d$ of  $\pg(2d-1,q)$ that meet $(\mp,\ml)$ in a non-degenerate polar space of rank 1 and parameter $e+d-2$, and $a \in \RR^*$.
 In particular, the weight distribution bound is tight if and only if $(\mp,\ml)$ is isomorphic to $\Sp(4,q)$, $\OO^+(2d,q)$ with $d \in \{2,3,4\}$, or $\U(2d,q)$ with $d \in \{2,3\}$.
\end{thm}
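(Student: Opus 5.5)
Throughout we use the standard facts about weight-distribution-bound-optimal eigenvectors of a strongly regular graph for its negative eigenvalue $\theta_2$. Such a vector $v$, after scaling, takes values in $\{a,-a,0\}$. The sets $S^+=\{v=a\}$ and $S^-=\{v=-a\}$ have size $-\theta_2$. The induced subgraphs on $S^+$ and $S^-$ are cocliques. The bipartite graph between $S^+$ and $S^-$ is a perfect matching complement, and in fact is complete in the relevant sense. A vertex outside $S^+\cup S^-$ has equally many neighbours in $S^+$ and in $S^-$.

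\textbf{Step 1: translate into geometry.} With $-\theta_2=q^{d+e-2}+1$, we need two partial ovoid-like sets $S^+,S^-$ of points of $\mp$. Each is a set of $q^{d+e-2}+1$ pairwise non-collinear points, and every point of $S^+$ is collinear with every point of $S^-$. The eigenvalue equation at $P\in S^+$ reads $\theta_2 a=\sum_{Q\sim P}v(Q)=-a|S^-\cap P^\perp|$, which forces $S^-\subseteq P^\perp$. Hence $S^-\subseteq (S^+)^\perp$, and likewise $S^+\subseteq (S^-)^\perp$.

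\textbf{Step 2: show $S^\pm$ span complementary subspaces.} Put $\pi=\vspan{S^+}$, so $S^-\subseteq\pi^\perp$. Since $S^+$ consists of pairwise non-collinear points, $\pi$ contains no line of the polar space through two of its points. We would prove that $\pi\cap\mp$ is exactly $S^+$ and forms a non-degenerate rank-1 polar space, i.e.\ an ovoid-like set in $\pi$ with $q^{d+e-2}+1$ points.

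To do this, apply the balance condition to points $R\notin S^+\cup S^-$. Take $R$ in $\pi\cap\mp$, or collinear to many points of $S^+$. Then $R$ sees all of $S^-$ (as $S^-\subseteq\pi^\perp$), so it must also see $|S^-|$ points of $S^+$, which bounds the configurations tightly. Counting points of $\pi^\perp\cap\mp$ against $S^-$ in the same way should give $S^-=\pi^\perp\cap\mp$ and $S^+=\pi\cap\mp$, with $\pi\cap\pi^\perp$ containing no point of $\mp$.

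A rank-1 polar space of parameter $e'$ has $q^{e'}+1$ points. Matching this with $q^{d+e-2}+1$ gives $e'=d+e-2$. Comparing the possible dimensions of $\pi$ and $\pi^\perp$ (vector dimensions $k$ and $n-k$) with the fact that a rank-1 section of parameter $e'$ lives in vector dimension $2+e'$-ish, up to the classical conventions, forces $k=n-k=d$. Hence $n=2d$.

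\textbf{Step 3: converse and classification.} Conversely, take $n=2d$ and $\pi,\pi^\perp$ both meeting $\mp$ in rank-1 polar spaces of parameter $e+d-2$. We must check that $v$ is a $\theta_2$-eigenvector. At points of $\pi$ and $\pi^\perp$ this is immediate from Step 1 counts. For $R\notin\pi\cup\pi^\perp$, we need $|R^\perp\cap S^+|=|R^\perp\cap S^-|$. Write $R$ via its projections onto $\pi$ and $\pi^\perp$ (using $\pi\oplus\pi^\perp$ as the whole space). Then $R^\perp\cap\pi$ is a hyperplane of $\pi$ and $R^\perp\cap\pi^\perp$ is a hyperplane of $\pi^\perp$. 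Both hyperplanes are tangent or both are secant simultaneously, because $R$ being a point of $\mp$ ties the two projected forms together: the form value splits as a sum, and this sum vanishes. The hyperplane intersection counts then agree.

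Finally, existence requires a non-degenerate rank-1 polar space of parameter $e+d-2$ in vector dimension $d$. This amounts to $d=2+(e+d-2)$ type constraints in each family: for quadrics a rank-1 form in dimension $d$ has $e'\le 2$, and for Hermitian forms $e'=\tfrac12$ or $\tfrac32$. Combining this with $e'=e+d-2$ and also requiring the perp to be of the same type yields the listed cases: $\Sp(4,q)$, $\OO^+(2d,q)$ with $d\le4$, and $\U(2d,q)$ with $d\le3$.

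\textbf{Main obstacle.} The hard part is Step 2: upgrading "pairwise non-collinear and mutually perpendicular" to "$S^+$ is exactly $\pi\cap\mp$ and $\pi\oplus\pi^\perp$ is the whole space". I would first try the counting with the balance condition on points of $\pi\cap\mp$ and of $\pi^\perp\cap\mp$. Then I would use the size bound for partial ovoids of the rank-1 section to force equality.
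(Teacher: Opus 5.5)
Both directions of your proposal have genuine gaps. In the forward direction, the step that forces $n=2d$ is never actually made. The paper gets it from a dimension count. Assume w.l.o.g.\ $\dim\vspan{S^+}\le\dim\vspan{S^-}$; since $\vspan{S^-}\subseteq\pi^\perp$, this gives $\dim\pi\le n/2$. Let $\alpha\in\{0,\tfrac12,1\}$ for the orthogonal, unitary and symplectic cases respectively. Then $\dim\pi=2d'+e'+r-\alpha$ and $n=2d+e-\alpha$, with $e\ge\alpha$ (equality iff $\perp$ is type-preserving). Together with the partial-ovoid bound $q^{d+e-2}+1\le q^{d'+e'-1}+1$ in $\Pi_\pi$, every inequality must be an equality.

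Your ``$2+e'$-ish'' does not replace this count. Your claim that ``the same counting'' gives $S^-=\pi^\perp\cap\mp$ is also wrong as stated. A point of $(\pi^\perp\cap\mp)\setminus S^-$ already sees all of $S^+$, so the balance condition only places it in $\vspan{S^-}^\perp$, with no contradiction.

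Your balance idea can be completed, but it must be applied to $\vspan{S^-}$ and followed by a dimension comparison:
\begin{itemize}
\item Take $R\in(\pi\cap\mp)\setminus S^+$. Then $R\notin S^-$, since $S^-$ is a coclique and $R$ sees all of it. So $R$ sees all of $S^-$, hence by balance all of $S^+$, and therefore $R\in\pi^\perp$.
\item For $s\in S^+$, every point $R'\neq s$ of the line $Rs$ (a line of $\Pi$ inside $\pi$) again lies in $(\pi\cap\mp)\setminus S^+$. So $R'\in\pi^\perp$ as well, whence $s\in RR'\subseteq\pi^\perp$, which is absurd.
\item Thus $\pi\cap\mp=S^+$ is non-degenerate of rank $1$ and parameter $d+e-2$, so $\dim\pi=d+e-\alpha$. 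The same holds for $\vspan{S^-}$.
\item Since $\vspan{S^-}\subseteq\pi^\perp$ and $\dim\pi^\perp=n-\dim\pi=d$, this forces $e=\alpha$, $n=2d$ and $\vspan{S^-}=\pi^\perp$.
\end{itemize}
This is a legitimate alternative to the paper's partial-ovoid count, but none of it is in your write-up.

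Your converse fails for $\OO^+(6,q)$, where $\pi$ and $\pi^\perp$ are planes meeting $\mp$ in conics.
\begin{itemize}
\item For $q$ even, the polarised form is alternating on the $3$-space $\pi$. So $\pi\cap\pi^\perp$ is the common nucleus of the two conics, and $\pi+\pi^\perp$ is only a hyperplane. Points of $\mp$ outside it have no decomposition $x+y$.
\item For $q$ odd, the decomposition exists, but $f(x)=-f(y)$ only shows that $x^\perp\cap\pi$ and $y^\perp\cap\pi^\perp$ are both non-tangent lines. A non-tangent line in a conic plane may be secant ($2$ points) or external ($0$ points), and both occur. You still need the two types to agree, e.g.\ via discriminants.
\end{itemize}
The paper instead uses type-preservation. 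The space $\sigma=(\pi\cap R^\perp)^\perp=\vspan{\pi^\perp,R}$ has the same parameter as $\pi\cap R^\perp$. Then $\pi^\perp\cap R^\perp$ is a hyperplane of the tangent hyperplane $R^\perp\cap\sigma$ not containing $R$, so it has that parameter too.

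Your splitting argument does work for the unitary and symplectic cases, and for $\OO^+(4,q)$ and $\OO^+(8,q)$. There $\pi\oplus\pi^\perp$ is the whole space, and all non-tangent hyperplane sections of $\Pi_\pi$ have a single type. The paper uses the splitting only for $\OO^+(4,q)$ and $\OO^+(8,q)$. It does not work for $\OO^+(6,q)$.
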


A family of strongly regular graphs, closely related to collinearity graphs of finite polar spaces, are the affine polar graphs.
We consider them for elliptic and hyperbolic orthogonal polar spaces $\OO^\eps(2n,q)$ with $\eps = \pm 1$, in which case they are the second subconstituent of $\Gamma(\OO^\eps(2n+2,q))$.
In \cite[Proposition 2.2]{yip}, the authors gave examples of optimal eigenvectors of the negative eigenvalue of the hyperbolic affine polar graph $VO^+(4,q)$. 
We prove that these are the only optimal eigenvectors for the negative eigenvalue of elliptic and hyperbolic affine polar graphs, apart from the graph $VO^+(2,q)$, which has a rather trivial structure as discussed in Remark \ref{Rmk:Hamming}.

\begin{thm}
 \label{Thm:Main:Affine}
 Let $\theta_2$ denote the negative eigenvalue of the affine polar graph $VO^\eps(2d+1-\eps,q)$ with $\eps = \pm 1$, and $d \geq 1$, viewed as the second subconstituent of $\Gamma(\OO^\eps(2d+3-\eps,q))$.
 \begin{enumerate}
  \item The weight-distribution bound is never tight for $\theta_2 = -((q-1)q^d+1)$ for elliptic affine polar graphs $VO^-(2d+2,q)$.
  \item The weight distribution bound is tight for $\theta_2 = -(q^{d-1}+1)$ for the hyperbolic affine polar graph $VO^+(2d,q)$ if and only if $d \in \{1,2\}$.
  In this case, the optimal eigenvectors are exactly the optimal eigenvectors $v$ for $\Gamma(\OO^+(2d+2,q))$ as described in Theorem \ref{Thm:Main:Coll}, with the condition that the support of $v$ is contained in the second subconstituent.
 \end{enumerate}
\end{thm}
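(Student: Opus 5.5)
We prove Theorem~\ref{Thm:Main:Affine} by realising the affine polar graph $VO^\eps(2d+1-\eps,q)$ as the second subconstituent of $\Gamma = \Gamma(\OO^\eps(2d+3-\eps,q))$ with respect to a point $P_0$. Its vertices are the points of $\mp \setminus P_0^\perp$, and we identify them with vectors of the affine space $P_0^\perp \setminus \pi_\infty$ for a suitable hyperplane. The plan is to transfer optimal eigenvectors in both directions between this subconstituent and $\Gamma$.

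\textbf{Step 1: lifting eigenvectors.} Let $v$ be a $\theta_2$-eigenvector of the affine graph. Extend it by zero to a function $\tilde v$ on $\mp$. We want to show that $\tilde v$ is a $\theta_2'$-eigenvector of $\Gamma$, where $\theta_2'$ is the negative eigenvalue of $\Gamma$, at least when $v$ is optimal. Since $\Gamma$ is strongly regular, this amounts to two checks on the sums of $v$ over the neighbourhood in the subconstituent of each point $Q$ of $\mp \cap P_0^\perp$:
\begin{itemize}
\item for $Q = P_0$ the sum must vanish, which holds because $v$ is orthogonal to the all-one vector;
\item for $Q \neq P_0$ collinear with $P_0$, the neighbourhood of $Q$ in the subconstituent is an affine part $Q^\perp \setminus P_0^\perp$, i.e.\ a union of parallel classes of an affine hyperplane, and the sum over it must vanish.
\end{itemize}
For the second check we use the fact that the $\theta_2$-eigenspace of the affine polar graph is spanned by characters $\chi_a$ with $a$ in a specific orbit: isotropic $a$ for the hyperbolic case, or the appropriate class for the elliptic case. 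We then verify which characters sum to zero on the hyperplane sections $Q^\perp$. If lifting works for every eigenvector, this step is a direct computation. Comparing eigenvalues is a good sanity check. For the hyperbolic case $\OO^+(2d+2,q)$ has rank $d+1$ and $e=0$, so $\theta_2' = -(q^{d-1}+1)$, which matches $\theta_2$ for $VO^+(2d,q)$.

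\textbf{Step 2: hyperbolic case.} Assuming the lift is a $\theta_2'$-eigenvector with the same weight, we compare bounds. The weight distribution bound for the affine graph, $2(q^{d-1}+1)$ computed from its own intersection numbers, equals the bound for $\Gamma$. So an optimal $v$ lifts to an optimal eigenvector of $\Gamma(\OO^+(2d+2,q))$. By Theorem~\ref{Thm:Main:Coll}, this eigenvector is supported on $\pi\cap\mp$ and $\pi^\perp\cap\mp$, and such vectors exist only for rank $d+1 \in \{2,3,4\}$ with $n = 2d+2$. We then still need two things:
\begin{itemize}
\item show that rank $4$ (i.e.\ $d=3$) cannot occur, by checking that the support $\pi\cup\pi^\perp$ cannot avoid $P_0^\perp$. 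Each of $\pi\cap\mp$ and $\pi^\perp\cap\mp$ is a rank-1 polar space; since $P_0^\perp$ is a hyperplane, it meets $\pi$ in a hyperplane of $\pi$ and so contains points of the quadric once the quadric in $\pi$ is large enough. This counting argument should exclude $d=3$ and allow $d\in\{1,2\}$;
\item show the converse: such $\tilde v$ restricted to the subconstituent is a $\theta_2$-eigenvector. This is immediate, because restricting a $\Gamma$-eigenvector whose support misses $P_0^\perp$ gives the required local sum identity. Here $d=1$ should be checked against Remark~\ref{Rmk:Hamming}.
\end{itemize}

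\textbf{Step 3: elliptic case.} Here $\theta_2 = -((q-1)q^d+1)$ does not match the eigenvalue of $\Gamma(\OO^-)$, so lifting fails. Instead we argue directly with the weight distribution bound: tightness forces the support to split as an equitable-type structure $(S_+,S_-)$ with prescribed neighbour counts. We plan to derive a contradiction by using the character description of the eigenspace: an optimal eigenvector must be a combination of characters supported on few cosets, and projecting onto an affine line or plane should yield an integrality or counting contradiction.

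The main obstacle is Step 1's lifting (and its elliptic failure). Specifically, it must be checked that the sum vanishes over each $Q^\perp \setminus P_0^\perp$ with $Q$ a singular point of $P_0^\perp$. I would first try the character computation, summing $\chi_a$ over the affine hyperplane $\{x : B(x,u)=c\}$, which vanishes unless $a \in \langle u\rangle$. A fallback is to use only the combinatorial consequences of optimality from \cite{Krotov}, namely the local structure of the support.
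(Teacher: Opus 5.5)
\textbf{Gap: the elliptic case.} Your Step 3 leaves part (1) unproven. The phrase ``projecting onto an affine line or plane should yield an integrality or counting contradiction'' names no mechanism.

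The missing idea is that you never need eigenvectors of the big graph. Conditions 1 and 2 of Proposition~\ref{Prop:WDB} are purely combinatorial. They pass to $\Gamma(\OO^-(2d+4,q))$ because the second subconstituent is an induced subgraph. So $S$ and $T$ are partial ovoids of size $(q-1)q^d+1$, disjoint from $P_0^\perp$, with $T\subseteq\vspan{S}^\perp$. The eigenvalue mismatch you cite as the reason to abandon the geometry is therefore irrelevant.

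Now take $\pi=\vspan S$ with parameters $(d',q,e';r)$ and $\dim\pi\le\dim\vspan T$.
\begin{enumerate}
\item The dimension bound gives $2d'+e'+r=\dim\pi\le d+2$.
\item The partial-ovoid bound in $\Pi_\pi$ gives $q^d\le(q-1)q^d\le q^{d'+e'-1}$.
\item Together these force $d'=1$, $r=0$, $e'=d$, $S=\pi\cap\mp$, $q=2$ and $d\in\{1,2\}$.
\end{enumerate}
The two leftover cases then need specific geometry.
\begin{enumerate}
\item For $d=2$, $S$ is an entire $\OO^-(4,2)$, which meets the plane $\pi\cap P_0^\perp$.
\item For $d=1$, $\pi\cap P_0^\perp$ must be an external line, of type $\OO^-(2,2)$. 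Then $\vspan{P_0,\pi^\perp}$ is of type $\OO^+(4,2)$. Hence $\pi^\perp\cap P_0^\perp$ is a hyperbolic line inside its tangent cone at $P_0$. Since $T=\pi^\perp\cap\mp$ by the same argument applied to $T$, $T$ meets $P_0^\perp$.
\end{enumerate}
A generic character computation gives no hint of this reduction to $q=2$ or of the case analysis that finishes it.

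\textbf{Hyperbolic case: a different route that works once corrected.} Your lifting differs from the paper's argument and can be made to work, but as written it has an error. In $VO^+(2d,q)$ the character $\chi_a$ has eigenvalue $q|a^\perp\cap\mathcal{Q}|-|\mathcal{Q}|$, where $\mathcal{Q}$ is the quadric at infinity. This equals $\theta_1=(q-1)q^{d-1}-1$ for singular $a$, and $\theta_2$ for \emph{non-singular} $a$ — not isotropic $a$.

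The neighbourhood of $Q\in(\mp\cap P_0^\perp)\setminus\{P_0\}$ in the subconstituent is a single affine hyperplane $\{x : B(x,u)=c\}$. Here $\vspan u=P_0Q\cap H$ is a singular point at infinity. The required vanishing therefore holds exactly because a non-singular $a$ is never in $\vspan u$. With isotropic $a$, as you wrote it, the check would fail. The check at $P_0$ is vacuous, since $P_0$ has no neighbours in its second subconstituent; orthogonality to the all-ones vector plays no role.

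Once corrected, you obtain the stronger fact that every $\theta_2$-eigenvector extends by zero to $\Gamma(\OO^+(2d+2,q))$. The paper does not need this. The forward half of the proof of Theorem~\ref{Thm:Main:Coll} uses only conditions 1 and 2 of Proposition~\ref{Prop:WDB}, which transfer for free. That same direct transfer is what handles the elliptic case, where lifting cannot.

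Your converse, restricting a $\Gamma$-eigenvector supported off $P_0^\perp$, matches the paper. For excluding $d=3$, the relevant fact is that $\OO^-(4,q)$ is an ovoid of $\pg(3,q)$ and hence meets every plane; it is not a counting argument.
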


\section{Preliminaries}

\subsection{Finite embedded polar spaces}

Here we review fundamental results on finite polar spaces.
These can be found in standard references, such as \cite[Chapter 2]{Brouwer:VanMaldeghem}, \cite[Chapters 6, 7]{Cameron}, and \cite[Chapters 1,2]{Hirschfeld:Thas}.

Let $q$ be a prime power, and $\fq$ the finite field of order $q$.
The projective geometry $\pg(n-1,q)$ is the incidence geometry formed by the subspaces of $\FF_q^n$.
Throughout the paper, we use vector space dimension (not projective dimension), denoted by $\dim$.
We will however use projective terminology, that is 1- and 2-dimensional subspaces are respectively called points and lines.
We will often identify subspaces in $\pg(n-1,q)$ with the set of projective points contained in it.

A \emph{finite embedded point-line geometry} is a tuple $\Pi = (\mp,\ml)$ where $\mp$ is a set of points spanning a finite projective space $\pg(n-1,q)$ and $\ml$ is a set of lines in $\pg(n-1,q)$ that only contain points of $\mp$.
Two points $P,Q \in \mp$ are called \emph{collinear} if $P = Q$ or the line spanned by $P$ and $Q$ belongs to $\ml$.
A subspace $\pi$ of $\pg(n-1,q)$ of dimension at least 2 is called \emph{singular} if all lines contained in $\pi$ belong to $\ml$.
We call $\Pi$ a \emph{finite embedded polar space} if it satisfies the all-or-one axiom: given $P \in \mp$ and $\ell \in \ml$, $P$ is adjacent to either exactly one points of $\ell$, or all points of $\ell$.

The combinatorics of a finite embedded polar space is governed by the following numbers:
\begin{itemize}
 \item A singular subspace $\pi$ of $\Pi$ of maximal dimension $d$ is called a \emph{generator} of $\Pi$, and $d$ is called the \emph{rank} of $\Pi$.
 \item The \emph{order} of $\Pi$ is $q$ is $\Pi$ is embedded in $\pg(n-1,q)$ or equivalently all lines contain $q+1$ points.
 \item All finite embedded polar space have a \emph{parameter} $e$ such that any singular rank $d-1$ space is incident with exactly $q^e+1$ generators.
 \item The set of points in $\mp$ that are collinear to all points in $\mp$ is a subspace $\rad \Pi$ called the \emph{radical} of $\Pi$.
 Let $r$ be the dimension of $\rad \Pi$.
 We call $\Pi$ \emph{degenerate} if $r > 0$, and \emph{non-degenerate otherwise}.
\end{itemize}
We say that $\Pi$ has \emph{parameters} $(d-r,q,e;r)$.
If $r=0$, we omit this and simply write $(d,q,e)$.

Now suppose that $\Pi$ degenerate with parameters $(d,q,e;r)$ and consider $\rho = \rad \Pi$.
Consider the quotient vector space $\FF_q^n / \rho$.
Define $\mp / \rho = \sett{ P + \rho }{P \in \mp \setminus \rho}$ and $\ml / \rho = \sett{\ell + \rho}{\ell \in \ml,\ \ell \cap \rho = \varnothing}$ as set of points and lines in $\pg(n-r-1,q)$.
Then $\Pi/\rho = (\mp/\rho, \ml/\rho)$ is a non-degenerate polar space with parameters $(d,q,e)$.
From now on, when we talk about polar spaces, we will assume that they are non-degenerate unless explicitly stated otherwise.

The non-degenerate finite embedded polar spaces are classified, and hence also the degenerate ones.
They are either symplectic, orthogonal, or unitary, according to whether they are defined by an alternating bilinear form, quadratic form, or conjugate-symmetric sesquilinear form respectively.
We refer to Table \ref{Tab:Pol} for the classification.

\begin{table}[h!]
 \centering
 \begin{tabular}{ c | c | c | c | c }
  {\bf Family} & {\bf Subfamily} & {\bf Notation} & {\bf Ambient space} & $\boldsymbol e$ \\ \hline \hline
  Symplectic & & $\Sp(2d,q)$ & $\pg(2d-1,q)$ & 1 \\ \hline
  Orthogonal / & Hyperbolic & $\OO^+(2d,q)$ & $\pg(2d-1,q)$ & 0 \\
  Quadric & Parabolic & $\OO(2d+1,q)$ & $\pg(2d,q)$ & 1 \\
  & Elliptic & $\OO^-(2d+2,q)$ & $\pg(2d+1,q)$ & 2 \\ \hline 
  Unitary / Hermitian & Small & $\U(2d,q)$ & $\pg(2d-1,q)$ & $1/2$ \\
  ($q$ must be square) & Large & $\U(2d+1,q)$ & $\pg(2d,q)$ & $3/2$
 \end{tabular}
 \caption{Classification of the finite embedded polar space of rank $d$ and order $q$.}
 \label{Tab:Pol}
\end{table}
The polar spaces are pairwise non-isomorphic except for the isomorphism $\Sp(2d,q) \cong \OO(2d+1,q)$ which holds if and only if $q$ is even.
Therefore, we will not consider the parabolic quadrics $\OO(2d+1,q)$ with $q$ even.
Then all the finite embedded polar spaces that we consider have an associated \emph{polarity} $\perp$, that is, an involution on the subspaces of $\pg(n-1,q)$ that reverses inclusion.
The polarity is defined by taking orthogonal complements using a non-degenerate reflexive bilinear or sesquilinear form.
In the symplectic or unitary case, we simply use the form $f$ defining the polar space.
In the orthogonal case, $f$ is a quadratic form, and its polarisation $B(X,Y) = f(X+Y) - f(X) - f(Y)$ is non-degenerate and symmetric (subject to discarding $\OO(2d+1,q)$ when $q$ is even).

\bigskip

Given a polar space $\Pi = (\mp,\ml)$ with parameters $(q,d,e)$ embedded in $\pg(n-1,q)$, defined by the form $f$, and a subspace $\pi$, the restriction $\Pi_\pi = (\mp\cap \pi, \sett{ \ell \in \ml}{\ell \subseteq \pi })$ of $\Pi$ to $\pi$ is also a polar space, defined by the restriction of $f$ to $\pi$.
Note that $\mp$ and $\ml$ could be empty.
Then $\Pi_\pi$ belongs to the same family as $\Pi$, as that only depends on which form $f$ defines $\Pi$, but $\Pi_\pi$ can belong to a different subfamily.
We say that the parameters $(d',q,e';r')$ of $\Pi_\pi$ are the \emph{parameters} of $\pi$.
For ever point $P \in \mp$, $P^\perp$ is called the \emph{tangent hyperplane} at $P$.
It has parameters $(d-1,q,e;1)$, and every hyperplane section of $P^\perp$ that does not contain $P$ has parameters $(d-1,q,e)$ and is therefore of the same subfamily as $\Pi$.

We call the polarity $\perp$ of $\Pi$ \emph{type-preserving} if for every subspace $\pi$, the parameter $e'$ of $\pi$ is equal to the one of $\pi^\perp$.
We note that $\Pi_\pi$ and $\Pi_{\pi^\perp}$ both have radical $\pi \cap \pi^\perp \cap \mp$ and are both of order $q$, hence $\pi$ automatically preserves the dimension $r'$ of the radical and the order $q$.

\begin{prop}
 \label{Prop:Type preserving}
 The finite embedded polar spaces with a type-preserving polarity are exactly $\Sp(2d,q)$, $O^+(2d,q)$, and $U(2d,q)$.
\end{prop}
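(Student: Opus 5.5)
The plan is to prove the two directions separately. For the positive direction we use the standard normal forms of the defining forms. For the negative direction we exhibit, in every other polar space, a subspace $\pi$ whose parameter $e'$ differs from that of $\pi^\perp$.

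\textbf{Positive direction.} In $\Sp(2d,q)$ every subspace $\pi$ carries the restriction of an alternating form. Modulo its radical, such a form is non-degenerate alternating, so $\Pi_\pi$ is symplectic and $e'=1$ on both sides. The degenerate case with all points singular we regard as having parameter $1$ too, via the quotient.

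For $\OO^+(2d,q)$ and $\U(2d,q)$ we argue by dimension and discriminant. Let $\rho=\pi\cap\pi^\perp$, of dimension $r'$. The non-degenerate parts $\pi/\rho$ and $\pi^\perp/\rho$ have dimensions $m=\dim\pi-r'$ and $2d-\dim\pi-r'$. These are congruent mod $2$, because $\dim\pi+\dim\pi^\perp=2d$.

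In the unitary case the parameter is $1/2$ or $3/2$ according to whether the non-degenerate dimension is even or odd, so parity decides it and both sides agree.

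In the orthogonal case, odd non-degenerate dimension gives a parabolic space with $e'=1$ on both sides. (We must take care that $q$ is odd here. For $q$ even and $\OO^+$, an odd-dimensional non-degenerate part cannot occur, since the polarisation is alternating and so its non-degenerate parts are even-dimensional.) For even non-degenerate dimension, the type hyperbolic versus elliptic is determined by the Witt index. Choose a complement $\pi'$ of $\rho$ in $\pi$. Then $\pi'\perp(\pi')^\perp$ is an orthogonal decomposition of the whole space, and $(\pi')^\perp\supseteq\pi^\perp$ is itself of the form $\rho\oplus$ (a hyperbolic pair space) $\oplus$ (a non-degenerate part isometric to $\pi^\perp/\rho$). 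By Witt cancellation, the Witt class of $\pi^\perp/\rho$ is the negative of that of $\pi/\rho$ inside the hyperbolic Witt class. The hyperbolic class is trivial, and the elliptic plane class has order $2$, so both are elliptic or both hyperbolic. This is cleanest phrased as: Witt indices add up to the total index $d$, together with the dimension count. The cleanest route is to count the Witt index directly: $\mathrm{ind}(\pi/\rho)+\mathrm{ind}(\pi^\perp/\rho)+r'=d$ forces both to be maximal when the total is hyperbolic.

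\textbf{Negative direction.} For $\OO(2d+1,q)$ with $q$ odd, take $\pi=P^\perp$ for a non-singular point $P$. Then $\pi$ is non-degenerate of even dimension $2d$, so it is $\OO^\pm(2d,q)$ with $e'\in\{0,2\}$, while $\pi^\perp=P$ has no points, i.e. rank 0. We should make precise that a $1$-dimensional anisotropic space has parameter $1$ (parabolic $\OO(1,q)$). This convention check is the subtle point; if it is troublesome, pick instead a non-degenerate $3$-space meeting the quadric in a conic ($e'=1$) whose perp is even-dimensional non-degenerate.

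For $\OO^-(2d+2,q)$, take an elliptic line $\ell$ (anisotropic 2-space, $e'=2$). Its perp is non-degenerate of dimension $2d$ with Witt index $d$, hence hyperbolic with $e'=0$.

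For $\U(2d+1,q)$, take a non-singular point $P$ (odd dimension, $e'=3/2$). Its perp has even dimension, so $e'=1/2$.

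The main obstacle is the bookkeeping of the parameter $e'$ for small or anisotropic subspaces, and the even-characteristic orthogonal case. Both are handled by always reducing modulo the radical and reading off the type from dimension parity and Witt index.
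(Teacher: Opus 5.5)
Your negative direction is fine, and more explicit than the paper's parity argument plus citation. A non-singular point in $\OO(2d+1,q)$ ($q$ odd) or in $\U(2d+1,q)$, and an elliptic line in $\OO^-(2d+2,q)$, are valid witnesses. The symplectic and unitary cases of the positive direction also match the paper's parity argument.

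The trouble is the positive direction for $\OO^+(2d,q)$. The step you call the cleanest route, $\mathrm{ind}(\pi/\rho)+\mathrm{ind}(\pi^\perp/\rho)+r'=d$, is false, because the Witt index is only superadditive under orthogonal sums. In $\OO^+(4,q)$ an external line $\ell$ has $r'=0$, and $\ell^\perp$ is again an external line, so the left-hand side is $0\neq 2$. If your identity held, every non-degenerate even-dimensional subspace of a hyperbolic space would be hyperbolic. What is additive is the Witt class (the discriminant for $q$ odd, the Arf invariant for $q$ even). So rely only on the argument you gave just before: from $\pi'\perp H\perp W$ with $W\cong\pi^\perp/\rho$ and trivial total class, you get $[W]=-[\pi']=[\pi']$. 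This holds because the only even-dimensional classes are $0$ and the class of the elliptic plane, which has order $2$. Likewise, the perp of an elliptic line in $\OO^-(2d+2,q)$ does have Witt index $d$, but by this class computation, not by adding indices.

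Second, for $q$ even your parenthetical remark hides a missing case. You take $\rho=\pi\cap\pi^\perp$, the radical of the polarisation on $\pi$, but in characteristic $2$ it need not be totally singular. In $\OO^+(4,q)$, a plane $\pi$ meeting $\mp$ in a conic has $\pi^\perp$ equal to the nucleus of that conic, a non-singular point lying in $\pi$. Then $\pi/\rho$ is even-dimensional, yet $\Pi_\pi$ is parabolic ($e'=1$), not hyperbolic or elliptic. Also, $(\pi')^\perp$ cannot be split as a hyperbolic space containing $\rho$ plus $W$, since $\rho$ is not totally singular.

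The repair is short. The quadratic form $f$ restricted to $\rho$ is additive and Frobenius-semilinear, so its kernel $\rho_0=\rho\cap\mp$ has codimension at most $1$ in $\rho$. Moreover, $\rho$ is also the radical of the polarisation on $\pi^\perp$, and $f|_\rho$ is the same seen from either side. If $\rho_0\neq\rho$, then $\pi/\rho_0$ and $\pi^\perp/\rho_0$ are both odd-dimensional non-degenerate quadratic spaces, hence both parabolic. If $\rho_0=\rho$, your Witt-class argument applies.

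This is the reason to quotient, as the paper does, by the singular radical $\pi\cap\pi^\perp\cap\mp$ rather than by $\pi\cap\pi^\perp$. The parity of $\dim\pi-\dim\rho$ then separates parabolic from hyperbolic/elliptic uniformly in every characteristic. That leaves only the hyperbolic-versus-elliptic question, which the paper settles by citing Hirschfeld--Thas and your Witt-class argument settles directly.
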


\begin{proof}
 Suppose that $(\mp,\ml)$ is a non-degenerate polar space embedded in $\pg(n-1,q)$ with polarity $\perp$.
 Let $\pi$ be a subspace of $\pg(n-1,q)$ of dimension $k$ with radical $\rho = \pi \cap \pi^\perp \cap \mp$ and let $r = \dim \rho$.
 Then $\dim \pi^\perp = n - k$.
 The non-degenerate polar space arising from quotienting $\Pi_\pi$ and $\Pi_{\pi^\perp}$ by $\rho$ are embedded in ambient projective spaces of dimension $k-r$ and $n-k-r$ respectively.
 The parities of $k-r$ and $n-k-r$ are equal if and only if $n$ is even.
 The subfamily of a (degenerate) polar space together with the dimension of its radical determines the parity of the dimension of the ambient projective space.
 The converse also holds, with the exception of hyperbolic and elliptic orthogonal polar spaces.
 
 It follows that $\perp$ cannot be type-preserving if $n$ is odd, and must be type-preserving if $n$ is even in the symplectic and small unitary case.
 The only remaining cases are $O^+(2d,q)$ and $O^-(2d+2,q)$.
 There, $O^+(2d,q)$ has a type-preserving polarity, and $O^-(2d+2,q)$ does not, see e.g.\ \cite[Table 1.4]{Hirschfeld:Thas}.
\end{proof}

\begin{prop}
 If $(\mp,\ml)$ is a finite embedded polar space with parameters $(d,q,e)$, and $\ms$ is its set of generators, then
 \begin{align*}
  |\mp| = (q^{d+e-1}+1) \frac{q^d - 1}{q-1}, &&
  |\ms| = \prod_{i=0}^{d-1} (1+q^{i+e}).
 \end{align*}
\end{prop}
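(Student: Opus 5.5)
We prove both formulas simultaneously by induction on the rank $d$. The base case is $d=1$. For $d=1$ there are no lines, so every point is a generator. The claimed formulas reduce to $|\mp| = q^{e}+1$ and $|\ms| = 1+q^{e}$. Both follow from the definition of the parameter $e$: the unique singular subspace of dimension $d-1=0$ is the empty subspace, and it lies in exactly $q^e+1$ generators, that is, points.

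For the induction step on $|\mp|$, fix a point $P\in\mp$ and count the points of $\mp$ outside and inside the tangent hyperplane $P^\perp$. A point $Q\in\mp$ is collinear with $P$ exactly when $Q\in P^\perp$. By the preliminaries, $P^\perp$ has parameters $(d-1,q,e;1)$ with radical $P$. Its quotient by $P$ is a polar space with parameters $(d-1,q,e)$, whose number of points $N_{d-1}$ is known by induction. Each point of that quotient corresponds to a line through $P$ containing $q$ further points of $\mp$. Hence $|\mp\cap P^\perp| = 1 + qN_{d-1}$.

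For the points not collinear with $P$, I would count pairs $(Q,R)$ where $Q\in\mp\setminus P^\perp$ and $R\in \mp$ is collinear with both $P$ and $Q$ but differs from $P$. Every line through $P$ in $\ml$ contains exactly one point collinear with $Q$, by the all-or-one axiom, and that point is not $P$. So each $Q$ yields $N_{d-1}$ such $R$. Conversely, fix $R\in P^\perp\cap\mp$ with $R\neq P$, and count the points $Q$ collinear with $R$ but not with $P$. These lie in $R^\perp\setminus P^\perp$. Since $R^\perp\cap\mp$ is a cone with vertex $R$ over a copy of the rank $d-1$ space, and $P$ lies in it, this count equals $q\cdot(N_{d-1} - (1+qN_{d-2}))$. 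Here I use the same structure one level down inside $R^\perp/R$.

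Solving the double count gives $|\mp\setminus P^\perp| = q^{2d+e-2}$ by a routine algebraic check. Then $N_d = 1+qN_{d-1}+q^{2d+e-2}$, and it remains to verify that $(q^{d+e-1}+1)\frac{q^d-1}{q-1}$ satisfies this recurrence. A simpler alternative I would try first avoids the double count: count the points of a hyperplane section of $P^\perp$ not containing $P$, which has parameters $(d-1,q,e)$. Each point outside $P^\perp$ lies on a unique line through $P$. The main obstacle I expect is justifying this count of non-collinear points cleanly from the axioms as stated. If needed, I would instead use the classification in Table~\ref{Tab:Pol} and count isotropic vectors directly for each form.

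For $|\ms|$, double count pairs $(P,\sigma)$ with $P\in\mp$, $\sigma\in\ms$ and $P\in\sigma$. Each generator contains $\frac{q^d-1}{q-1}$ points. The generators through $P$ correspond bijectively to the generators of $P^\perp/P$, which has parameters $(d-1,q,e)$. This gives
\[
|\ms|\frac{q^d-1}{q-1} = |\mp|\prod_{i=0}^{d-2}(1+q^{i+e}),
\]
and substituting the formula for $|\mp|$ yields the factor $1+q^{d-1+e}$, as required.
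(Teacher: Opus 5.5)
The paper gives no proof of this proposition; it quotes it from the standard references listed in the preliminaries. There the result is obtained either by counting singular vectors form by form for each entry of Table~\ref{Tab:Pol}, or by a recursion through tangent hyperplanes. Your argument is of the second kind and is correct. Its advantage is that it treats all families uniformly. It uses only the all-or-one axiom and two facts from the preliminaries: $P^\perp \cap \mp$ is the set of points collinear with $P$, and $P^\perp/P$ has parameters $(d-1,q,e)$.

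The obstacle you anticipate is not actually there, because your double count already closes, and it does so more cleanly than your ``routine algebraic check'' suggests. Write $X_k$ for the number of points not collinear with a fixed point in a polar space with parameters $(k,q,e)$. Your count for a fixed $R$, computed in $R^\perp/R$, is exactly $qX_{d-1}$. The double count therefore reads $X_d N_{d-1} = qN_{d-1}\cdot qX_{d-1}$. Dividing by $N_{d-1}\neq 0$ (valid for $d\geq 2$) gives $X_d = q^2X_{d-1}$. Together with $X_1 = q^e$ from your base case, this yields $X_d = q^{2d+e-2}$ directly. The recurrence $N_d = 1+qN_{d-1}+q^{2d+e-2}$ then gives the formula for $|\mp|$, as you say.

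By contrast, the ``simpler alternative'' you sketch is not a proof as stated. Knowing that each point off $P^\perp$ lies on a unique line through $P$ does not count those points. For that you also need the number of points of $\mp \setminus \{P\}$ on a non-tangent line through $P$. That number is $1$, $q^{1/2}$ or $q$ according to the family, which is exactly the form-by-form computation you were trying to avoid.

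Your count of generators, by double counting incident pairs $(P,\sigma)$ and using the bijection with generators of $P^\perp/P$, is fine as written.
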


A set $\mo$ of pairwise non-collinear points of $(\mp,\ml)$ is called a \emph{partial ovoid}.
Since every partial ovoid contains at most one point of each generator, and each point is contained in the same number of generators, a partial ovoid contains at most $q^{d+e-1} + 1$ points.
Equality holds if and only if every generator contains a unique point of $\mo$, in which case $\mo$ is called an \emph{ovoid}.

\subsection{Strongly regular graphs}

Here we review results on strongly regular graphs.
We refer the reader to \cite{Brouwer:VanMaldeghem}.

A graph $\Gamma$ is called \emph{strongly regular} with parameters $(v,k,\lambda,\mu)$ if it is a $k$-regular graph of order $v$ where the number of common neighbours of two distinct vertices equals $\lambda$ if they are adjacent, and $\mu$ otherwise.
We denote it as an $\SRG(v,k,\lambda,\mu)$.
A strongly regular graph $\Gamma$ is \emph{primitive} if both $\Gamma$ and its complement graph $\overline \Gamma$ are connected.

The \emph{adjacency matrix} of graph $\Gamma = (V,E)$ is the matrix $A_\Gamma \in \RR^{V \times V}$ defined by
\[
 A_\Gamma(x,y) = \begin{cases}
  1 & \text{if } \{x,y\} \in E, \\
  0 & \text{otherwise}.
 \end{cases}
\]
A graph $\Gamma$ is primitive strongly regular if and only if it is regular of some degree $k$, and $A_\Gamma$ has exactly 3 distinct eigenvalues $k > \theta_1 > \theta_2$.

\subsubsection{Collinearity graphs of polar spaces}

Given a non-degenerate polar space $\Pi = (\mp,\ml)$, its \emph{collinearity graph} is the graph $\Gamma(\Pi)$ with vertex set $\mp$, and its adjacency relation is being distinct and collinear.

\begin{prop}[{\cite[Theorem 2.2.12]{Brouwer:VanMaldeghem}}]
 \label{Prop:Eigenvals Coll}
 If $\Pi$ is a finite embedded polar space with parameters $(d,q,e)$, then its collinearity graph $\Gamma(\Pi)$ is strongly regular with eigenvalues
 \begin{align*}
  k = q \frac{q^{d-1}-1}{q-1} (q^{d+e-2}+1), &&
  \theta_1 = q^{d-1} - 1, &&
  \theta_2 = -(q^{d+e-2} +1).
 \end{align*}
\end{prop}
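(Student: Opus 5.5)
The plan is to show that $\Gamma(\Pi)$ is strongly regular by computing $(v,k,\lambda,\mu)$ geometrically, and then to read off the eigenvalues from the standard identity $A^2 = kI + \lambda A + \mu(J-I-A)$. This identity forces every non-principal eigenvalue to be a root of
\[
x^2 - (\lambda-\mu)x - (k-\mu) = 0 .
\]
Throughout, write $N_{d'} = (q^{d'+e-1}+1)\frac{q^{d'}-1}{q-1}$ for the number of points of a non-degenerate polar space of rank $d'$ in the same subfamily, as given by the point-count proposition above. Then $v = N_d$.

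The three counts all use the fact that a degenerate polar space with radical $\rho$ is a cone with vertex $\rho$ over a non-degenerate base.
\begin{itemize}
\item \emph{Degree $k$.} The neighbours of $P$ are the points of $P^\perp\cap\mp$ other than $P$. Since $P^\perp$ has parameters $(d-1,q,e;1)$, it is a cone with vertex $P$ over a rank $d-1$ space of the same subfamily. Each base point gives $q$ points besides $P$ on its line to $P$, so $k = qN_{d-1}$, which matches the formula claimed in the statement.
\item \emph{Parameter $\mu$.} Take non-collinear $P,Q$. Their common neighbours are the points of $P^\perp\cap Q^\perp$. This is a hyperplane section of $P^\perp$ not containing $P$, so by the preliminaries it has parameters $(d-1,q,e)$, and $\mu = N_{d-1}$.
\item \emph{Parameter $\lambda$.} Take collinear $P,Q$ and the line $\ell = PQ$. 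Then $\ell^\perp$ is a cone with vertex $\ell$ over a rank $d-2$ space. Counting the points of $\ell^\perp\cap\mp$ and removing $P$ and $Q$ gives $\lambda = (q-1) + q^2N_{d-2}$.
\end{itemize}

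Next, substitute into the quadratic. The product of the roots is $-(k-\mu) = -(q-1)N_{d-1} = -(q^{d-1}-1)(q^{d+e-2}+1)$. This already matches $\theta_1\theta_2$ for the claimed values $\theta_1 = q^{d-1}-1$ and $\theta_2 = -(q^{d+e-2}+1)$. For the sum, I would check directly that
\[
\lambda-\mu = (q-1) + q^2N_{d-2} - N_{d-1} = q^{d-1} - q^{d+e-2} - 2 ,
\]
which is $\theta_1+\theta_2$. Expanding $N_{d-1} - q^2N_{d-2}$ with $\frac{q^{j}-1}{q-1}$ makes this a short telescoping computation. It is the only real calculation, and I expect it to be the main place where care is needed, especially with the half-integer $e$ in the unitary case, since the formulas are stated uniformly in $e$.

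Finally, I would dispose of the degenerate cases. For $d = 1$ the graph has no edges. For $d = 2$, $N_0 = 0$, so $\lambda = q-1$ (the other points of the line), which the formula still gives correctly. For $d \geq 2$ the graph is connected with $\mu > 0$, and its complement is connected because generators do not cover everything pairwise. So both roots actually occur as eigenvalues, and $k > \theta_1 > \theta_2$ as stated.
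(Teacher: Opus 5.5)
The paper gives no proof of this proposition; it is quoted from Brouwer and Van Maldeghem. Your argument is the standard self-contained derivation and it is correct: you read off $k$, $\mu$ and $\lambda$ from the cone structure of $P^\perp$, $P^\perp\cap Q^\perp$ and $\ell^\perp$, then solve $x^2-(\lambda-\mu)x-(k-\mu)=0$. Compared with the citation, it costs a page. In return it uses only the point-count formula and the tangent-hyperplane fact from the preliminaries, and it produces $\lambda$ and $\mu$ explicitly. The cone description of $\ell^\perp$ is not stated in the paper, but it follows by applying the tangent-hyperplane fact twice: once in $\Pi$, and once in a hyperplane section of $P^\perp$ avoiding $P$. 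The computation you were worried about holds for arbitrary real $e$, since it only uses exponent rules, so the unitary case causes no trouble:
\begin{align*}
 (q-1)\bigl(N_{d-1}-q^2N_{d-2}\bigr)
 &= (q^{d+e-2}+1)(q^{d-1}-1)-(q^{d+e-1}+q^2)(q^{d-2}-1) \\
 &= (q-1)\bigl(q^{d+e-2}-q^{d-1}+q+1\bigr).
\end{align*}
Hence $\lambda-\mu=q^{d-1}-q^{d+e-2}-2=\theta_1+\theta_2$. Likewise $k-\mu=(q-1)N_{d-1}=-\theta_1\theta_2$.

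Two small corrections to your last paragraph.

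First, for $d=1$ the graph is edgeless, so $-(q^{e-1}+1)$ is not an eigenvalue. The statement is false there, not merely degenerate, and must be read with $d\geq 2$, as the paper tacitly does.

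Second, the phrase that generators do not cover everything pairwise is not an argument, and you do not need it. The complement of a strongly regular graph is disconnected only when $\mu=k$, and your formulas give $k-\mu=(q-1)N_{d-1}>0$. For the eigenvalue claim, it already suffices that $\Gamma$ is connected ($\mu>0$) and not complete. Then $\Gamma$ has diameter $2$, hence at least three distinct eigenvalues, so both roots of the quadratic occur.
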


\subsubsection{Affine polar graphs}

Let $S$ be a set of points in $\pg(n-1,q)$.
Embed $\pg(n-1,q)$ as a hyperplane $H$ in $\pg(n,q)$ and let $\ag(n,q)$ denote the affine space formed by the subspaces of $\pg(n,q)$ that are not contained in $H$.
Make a graph $\Gamma_S$ whose vertices are the points of $\ag(n,q)$, and where two vertices $P,Q$ are adjacent if and only if the line $PQ$ intersects $H$ in a point of $S$.
Equivalently, $\Gamma_S$ can be defined as the Cayley graph on $\fq^n$ with connection set $\sett{ x \in \fq^n \setminus \{\zero\}}{ \vspan x \in S }$.
The number of non-principal eigenvalues of $\Gamma_S$ equals the number of hyperplane intersection numbers, that is $|\sett{ | \Pi \cap S | }{\Pi \text{ a hyperplane of } H}|$.
In particular, if $S$ is the point set of an elliptic or hyperbolic quadric $\OO^\eps(n,q)$, then $\Gamma_S$ has two non-principal eigenvalues, and hence is strongly regular.
We denote the corresponding graph by $VO^\eps(n,q)$.

\begin{prop}[{\cite[\S 3.3.1]{Brouwer:VanMaldeghem}}]
 The graph $VO^\eps(2m,q)$ is strongly regular with eigenvalues
 \begin{align*}
  k = (q^m-\eps)(q^{m-1}+\eps), &&
  \theta_{(3-\eps)/2} = \eps(q-1)q^{m-1}-1, &&
  \theta_{(3+\eps)/2} = -\eps q^{m-1} - 1.
 \end{align*}
\end{prop}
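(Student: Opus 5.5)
The plan is to compute the spectrum of the Cayley graph $VO^\eps(2m,q)$ directly with additive characters of $\fq^{2m}$. For a graph with exactly three distinct eigenvalues that is regular and connected, strong regularity then follows from the characterisation recalled above. Concretely, let $Q$ be the quadratic form defining $\OO^\eps(2m,q)$ and $S$ its point set. The connection set is $D = \sett{x \ne \zero}{Q(x)=0}$. Fix a nontrivial additive character $\psi$ of $\fq$ and the standard dot product on $\fq^{2m}$. The eigenvalue belonging to the character $x \mapsto \psi(a\cdot x)$ is $\sum_{x \in D}\psi(a \cdot x)$.

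\textbf{Step 1: reduce the character sums to hyperplane intersection numbers.} For $a=\zero$ the sum is $|D| = (q-1)|S|$. Using the formula for $|\mp|$ with $(d,e)=(m,0)$ in the hyperbolic case and $(d,e)=(m-1,2)$ in the elliptic case, this gives $k=(q^m-\eps)(q^{m-1}+\eps)$. For $a\neq\zero$, I would group $D$ into projective points $\vspan x\in S$ and sum over the $q-1$ nonzero scalar multiples of $x$. The inner sum is $q-1$ if $a\cdot x=0$ and $-1$ otherwise. Writing $H_a$ for the hyperplane $a\cdot x = 0$, this yields
\[
 \lambda_a = (q-1)|S\cap H_a| - \bigl(|S|-|S\cap H_a|\bigr) = q|S\cap H_a| - |S|.
\]
This is exactly the relation between non-principal eigenvalues and hyperplane intersection numbers quoted before the proposition.

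\textbf{Step 2: classify the hyperplane sections.} Since $q$ is arbitrary but the dimension $2m$ is even, the polarisation of $Q$ is non-degenerate. Hence every hyperplane is $P^\perp$ for a unique point $P$ of $\pg(2m-1,q)$. If $P\in S$, then $P^\perp$ is a tangent hyperplane with parameters $(m'-1,q,e;1)$, where $m'$ is the rank. It is a cone with vertex $P$ over $\OO^\eps(2m-2,q)$, so $|S\cap P^\perp| = 1+q|\OO^\eps(2m-2,q)|$. If $P\notin S$, then $P^\perp$ is non-degenerate of odd vector dimension $2m-1$, hence a parabolic quadric $\OO(2m-1,q)$ with $\frac{q^{2m-2}-1}{q-1}$ points. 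Both cases occur: $S$ is nonempty, and it is not all of $\pg(2m-1,q)$.

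\textbf{Step 3: compute and conclude.} Substituting into $q|S\cap H|-|S|$ is a routine computation. The tangent case gives
\[
 \frac{\eps q^{m-1}(q-1)^2-(q-1)}{q-1} = \eps(q-1)q^{m-1}-1,
\]
and the non-tangent case gives $-\eps q^{m-1}-1$. These are the two claimed non-principal eigenvalues. Which of them is $\theta_1$ and which is $\theta_2$ simply depends on the sign of $\eps$, which gives the indexing $\theta_{(3\mp\eps)/2}$.

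It remains to check the hypotheses of the three-eigenvalue characterisation. The graph is $k$-regular. It is connected because $D$ spans $\fq^{2m}$, the quadric spanning its ambient space. Since $k$ is a simple eigenvalue and exactly two further distinct eigenvalues occur, the graph is primitive strongly regular. The only points needing care are the hyperplane classification in Step 2, in particular the even-$q$ case and the degenerate small case $m=1$, which I would check by hand.
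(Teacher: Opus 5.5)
Your proof is correct and follows the route the paper relies on. The paper gives no proof beyond citing Brouwer--Van Maldeghem and its preceding remark that the non-principal eigenvalues of $\Gamma_S$ come from hyperplane intersection numbers; your identity $\lambda_a = q|S\cap H_a| - |S|$, together with the tangent/non-tangent hyperplane count, is exactly that argument made explicit, and it also works for even $q$, where the non-tangent section is a non-singular parabolic quadric with nucleus $P$.

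The remaining caveats are boundary cases. For $m=1$, $\eps=-1$ the quadric is empty and the graph is edgeless, so the statement itself degenerates (the paper only uses $m\geq 2$ there). Also, $VO^+(2,2)\cong K_{2,2}$ shows that connectivity plus three distinct eigenvalues gives only ``strongly regular'', not ``primitive''.
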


Now consider a general primitive strongly regular graph $\Gamma$, and a vertex $x$ of $\Gamma$.
Let $\Gamma_2(x)$ denote the induced subgraph of $\Gamma$ on the set of vertices at distance 2 of $x$.
Then $\Gamma_2(x)$ is called the  \emph{second subconstituent} of $\Gamma$ with respect to $x$.
The collinearity graphs $\Gamma(\Pi)$ of all finite embedded polar spaces $\Pi$ are vertex-transitive; hence, their second subconstituents are isomorphic and we can refer to them as \emph{the} second subconstituent of $\Gamma(\Pi)$.

The second subconstituent of $\Gamma(\OO^\eps(2m,q))$ with $\eps = \pm 1$ is $VO^\eps(2m-2,q)$.
This is folklore, but the only reference we were able to find is \cite{SageMath}, so we give a short explanation here.

\begin{lm}
 For $\eps=\pm1$, the graph $VO^\eps(2m,q)$ is isomorphic to the second subconstituent of $\Gamma(\OO^\eps(2m+2,q))$.
\end{lm}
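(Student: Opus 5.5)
We would prove the lemma by an explicit coordinate identification. Take the quadric $\OO^\eps(2m+2,q)$ in $\pg(2m+1,q)$ with quadratic form $Q(x_0,x_1,y) = x_0x_1 + f(y)$, where $y \in \fq^{2m}$ and $f$ is a non-degenerate quadratic form of type $\eps$ on $\fq^{2m}$. This is the standard decomposition: splitting off a hyperbolic line from a form of type $\eps$ leaves a form of the same type. We fix the point $X = \vspan{(1,0,\zero)}$ of the quadric. Its tangent hyperplane is $X^\perp = \{x_1 = 0\}$, so the vertices of the second subconstituent $\Gamma_2(X)$ are the quadric points with $x_1 \neq 0$. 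These points can be normalised uniquely as $\vspan{(-f(y),1,y)}$ with $y \in \fq^{2m}$ arbitrary. This gives a bijection $\phi\colon \fq^{2m} \to \Gamma_2(X)$, $y \mapsto \vspan{(-f(y),1,y)}$.

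Next we check adjacency. Two distinct quadric points are collinear in the polar space exactly when they are orthogonal with respect to the polarisation $B$. Since the bilinear form $B$ splits over the two summands, we compute
\[
 B\bigl((-f(y),1,y),(-f(z),1,z)\bigr) = -f(y)-f(z) + B_f(y,z) = -f(y-z),
\]
using $f(y-z) = f(y)+f(z) - B_f(y,z)$. So $\phi(y)$ and $\phi(z)$ are adjacent if and only if $y \neq z$ and $f(y-z)=0$.

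In the graph $\Gamma_S$ with $S$ the point set of the quadric $f=0$ in $H \cong \pg(2m-1,q)$, the vertices $y,z$ are adjacent if and only if $y-z$ is a nonzero vector with $\vspan{y-z} \in S$, that is, $f(y-z)=0$. This is exactly $VO^\eps(2m,q)$, so $\phi$ is the desired isomorphism.

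The only step needing care is justifying the normal form $Q = x_0x_1 + f(y)$ with $f$ of the same type $\eps$. This follows from Witt's decomposition: $X$ lies in a hyperbolic pair spanning a hyperbolic line $L$, and $L^\perp$ carries a non-degenerate form of the same Witt type, by the classification in Table~\ref{Tab:Pol}. In even characteristic we also have to check that the adjacency condition uses the quadratic form and not merely $B$. It does: both points lie on the quadric, so the line joining them is on the quadric if and only if $B$ vanishes on them, and the displayed identity holds in all characteristics.
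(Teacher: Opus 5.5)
Your proof is correct. It is essentially a coordinate version of the paper's argument rather than a different construction. The paper projects from the point $P$ onto a hyperplane $H$ not containing $P$. In your coordinates, with $H=\{x_0=0\}$, this projection sends $\phi(y)$ to $\langle(0,1,y)\rangle$, so the two bijections coincide, and the hyperplane at infinity $H\cap X^\perp=\{x_0=x_1=0\}$ is exactly your $L^\perp$.

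The difference is in how adjacency is checked. The paper argues synthetically with two geometric facts. First, $R$ and $S$ are collinear if and only if the unique point $RS\cap P^\perp$ lies on the quadric, since a line not contained in the quadric meets it in at most two points. Second, projection from $P$ preserves membership in $\mp$ for points of $P^\perp$, since a line through $P$ inside $P^\perp$ either lies in the quadric or meets it only in $P$. You replace both facts with the single identity $B\bigl((-f(y),1,y),(-f(z),1,z)\bigr)=-f(y-z)$. This lands directly on the Cayley-graph description of $VO^\eps(2m,q)$ and is visibly characteristic-free.

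The price is the normal form $x_0x_1+f(y)$ with $f$ of type $\eps$. This is the same input the paper uses when it asserts that $H\cap P^\perp$ meets the quadric in an $\OO^\eps(2m,q)$. Your appeal to the classification table is slightly terse here. A one-line rank count would make it explicit: $X^\perp=X\oplus L^\perp$, so any generator through $X$ meets $L^\perp$ in a totally singular subspace of dimension one less. Hence $L^\perp$ has rank exactly one less than the whole quadric, which forces type $\eps$.
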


\begin{proof}
 Embed $\OO^\eps(2m+2,q) = (\mp,\ml)$ in $\pg(2m+1,q)$ with polarity $\perp$.
 Take a point $P \in \mp$ and a hyperplane $H$ of $\pg(2m+1,q)$ that does not contain $P$.
 For each point $R \neq P$, define $\proj(R) = PR \cap H$.
 Every line of $\pg(2m+1,q)$ through $P$ outside $P^\perp$ is 2-secant to $\mp$ hence $\proj$ yields a bijection between $\mp \setminus P^\perp$ and $H \setminus P^\perp$.
 Two distinct points $R,S \in \mp \setminus P^\perp$ are collinear in $\OO^\eps(2m+2,q)$ if and only if $RS \cap P^\perp$ is a point of $\mp$.
 Indeed, if $R$ and $S$ are collinear, then the line $RS$ is fully contained in $\mp$, which includes the point $RS \cap P^\perp$.
 Otherwise, $RS$ only intersects $\mp$ in $R$ and $S$.
 Note that $\proj(RS \cap P^\perp) = \vspan{\proj(R), \proj(S)} \cap P^\perp$, and $RS \cap P^\perp$ and $\proj(RS \cap P^\perp)$ are in $P^\perp$ and on the same line through $P$, hence they either both belong to $\mp$ or they both do not.
 Therefore, $R$ and $S$ are collinear in $\OO^\eps(2m+2,q)$ if and only if $\proj(R)$ and $\proj(S)$ span a line in $H$ that intersects $P^\perp$ in a point of $\mp$.
 Moreover, $H \cap P^\perp$ intersects $\mp$ in a quadric of type $\OO^\eps(n,q)$.
 This shows that $\proj$ yields an isomorphism from the second subconstituent of $\Gamma(\OO^\eps(2m+2,q))$ to $VO^\eps(n,q)$.
\end{proof}

\subsubsection{The weight distribution bound for strongly regular graphs}

The \emph{weight} of a vector $w$ is the number of coordinate positions in which $w$ has a non-zero entry.
We denote it by $\wt(w)$.
Given a subset $W$ of the vertex set $V$ of graph $\Gamma$, we define the \emph{characteristic vector} of $W$ as
\[
 \chi_W: V \to \RR: x \mapsto \begin{cases}
  1 & \text{if } x \in W, \\
  0 & \text{otherwise}.
 \end{cases}
\]
The weight distribution bound from \cite[Corollary 1]{Krotov} when applied to the negative eigenvalue of a strongly regular graph looks as follows, see also e.g.\ \cite[Lemma 4]{Evans}.
We remind the reader that a set of vertices in a graph is called a \emph{clique} or \emph{coclique} if any pair of vertices in it is adjacent or non-adjacent respectively.

\begin{prop}[Weight distribution bound {\cite{Krotov}}]
 \label{Prop:WDB}
 Suppose that $\Gamma = (V,E)$ is a primitive strongly regular graph with smallest eigenvalue $\theta_2$.
 If $v$ is a $\theta_2$-eigenvector of $A_\Gamma$, then $\wt(v) \geq 2 |\theta_2|$.
 Equality holds if and only if $v$ is a non-zero scalar multiple of $\chi_S - \chi_T$, where
 \begin{enumerate}
  \item $S$ and $T$ are disjoint cocliques of size $|\theta_2|$ in $\Gamma$,
  \item every vertex in $S$ is adjacent to every vertex in $T$,
  \item every vertex not in $S \cup T$ has the same number of neighbours in $S$ and in $T$.
 \end{enumerate}
\end{prop}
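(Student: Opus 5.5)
We would prove Proposition~\ref{Prop:WDB} by a direct local argument, using only the strongly regular identities. Write $k$ for the degree and $\theta_1>\theta_2$ for the non-principal eigenvalues, so that $\lambda-\mu=\theta_1+\theta_2$ and $k-\mu=-\theta_1\theta_2$. Let $v$ be a nonzero $\theta_2$-eigenvector. Since $v\perp\mathbf 1$, it takes both positive and negative values. Let $x$ be a vertex where $|v(x)|$ is maximal; after scaling, assume $v(x)=1\geq |v(y)|$ for all $y$.

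The eigen-equation at $x$ gives $\sum_{y\sim x}v(y)=\theta_2$. Hence at least $|\theta_2|$ neighbours of $x$ carry negative values, and at least $|\theta_2|$ of them equal $-1$ if we want equality. To obtain $2|\theta_2|$, we use the second projection. Consider the vector $w=(A-\theta_1 I)v$ restricted suitably, or more concretely the identity $A^2=kI+\lambda A+\mu(J-I-A)$. Applying it to $v$ at $x$ gives
\[
\sum_{z:\,d(x,z)=2}v(z)=\frac{\theta_2^2-k-(\lambda-\mu)\theta_2}{\mu}\,v(x)\cdot(\ldots).
\]
A cleaner route is to work with the nonnegative and nonpositive parts separately. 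Let $P=\{y: v(y)>0\}$ and $N=\{y: v(y)<0\}$. For $x\in P$ of maximal value, the equation above shows $|N\cap\Gamma(x)|\geq|\theta_2|$. Symmetrically, a vertex $x'\in N$ of maximal $|v|$ has at least $|\theta_2|$ neighbours in $P$. This only yields $\wt(v)\geq|\theta_2|+1$ directly. To double this, we intend to use the standard Krotov et al.\ argument: take the local vector at $x$, namely $v$ on $\{x\}\cup\Gamma(x)$ together with the equation $\sum_{y\sim x}(v(y)-\theta_2 v(x)/k)\ldots$. Then we combine the eigen-equations at $x$ and at its neighbours and count weight on $\{x\}\cup\Gamma(x)$ and on the distance-2 layer. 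These two layers contribute at least $|\theta_2|$ and $|\theta_2|$ respectively; this is where the identity $k-\mu=-\theta_1\theta_2$ enters.

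Once the bound is in place, the equality case forces tightness everywhere. All nonzero entries are $\pm1$ (after scaling). The support of $P$ is exactly $|\theta_2|$ vertices, all adjacent to $x$'s negative set, and similarly for $N$. We then show $P$ and $N$ are cocliques: for $y,y'\in P$ adjacent, the eigen-equation at $y$ would give a sum exceeding $\theta_2$. Next, $P$ and $N$ are completely joined: each $y\in P$ needs $|\theta_2|$ neighbours with value $-1$, and $|N|=|\theta_2|$. Condition~3 is exactly the eigen-equation $\sum_{y\sim z}v(y)=\theta_2 v(z)=0$ at vertices $z$ outside the support. For the converse, $\chi_S-\chi_T$ satisfying the three conditions is checked to be a $\theta_2$-eigenvector directly: at $s\in S$ the sum is $-|T|=\theta_2$, and similarly at vertices of $T$ and at vertices outside $S\cup T$.

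The main obstacle is the doubling step: getting $2|\theta_2|$ rather than $|\theta_2|+1$. Since the paper cites \cite[Corollary 1]{Krotov}, the realistic plan is to invoke that bound and prove only the characterization of equality as above.
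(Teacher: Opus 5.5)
\textbf{There is a genuine gap: the bound is never proved, and the equality argument cannot run off a cited bound.}

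The paper does not prove this proposition itself. It quotes it from \cite[Corollary 1]{Krotov}, with \cite[Lemma 4]{Evans} for the strongly regular specialisation, and takes both the inequality and the characterisation of equality from those sources. Your fallback of citing the inequality therefore matches the paper for that half. Your own treatment of the equality case, however, does not stand on a black-box bound. A citation tells you only that $\wt(v)=2|\theta_2|$. The steps ``all nonzero entries are $\pm1$'' and ``$|P|=|N|=|\theta_2|$'' require knowing which inequalities inside a proof of the bound become equalities, and you never establish such a chain. Your ``symmetric'' step is also false as stated. With $\max|v|=1$ attained at $x\in P$, a vertex $x'\in N$ of largest $|v|$ may have $v(x')=-m$ with $m<1$, and the eigen-equation then only gives at least $m|\theta_2|$ positive neighbours. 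As written, the equality characterisation is unproved: either cite it too, as the paper does, or prove the bound.

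The missing idea is elementary and removes the ``doubling'' obstacle entirely. Since $\theta_2\neq k$, we have $v\perp\mathbf 1$. A primitive strongly regular graph has diameter $2$, so its distance-$2$ matrix is $J-I-A_\Gamma$, and
\[
 \sum_{z\in\Gamma_2(x)} v(z) \;=\; -v(x)-\sum_{y\sim x}v(y) \;=\; -1-\theta_2 \;=\; |\theta_2|-1 .
\]
Because $|v|\leq 1$, at least $|\theta_2|-1$ vertices of $\Gamma_2(x)$ carry positive values. Adding $x$ and its at least $|\theta_2|$ negative neighbours gives
\[
 \wt(v)\geq 1+|\theta_2|+(|\theta_2|-1)=2|\theta_2| .
\]
This is exactly the sum $\sum_i|W_i|$ of Krotov et al.\ with $W_0=1$, $W_1=\theta_2$, $W_2=-1-\theta_2$. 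The identity $k-\mu=-\theta_1\theta_2$ plays no role. Note also that the two layers contribute $1+|\theta_2|$ and $|\theta_2|-1$, not $|\theta_2|$ each.

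Equality now forces $x$ to have exactly $|\theta_2|$ nonzero neighbours, all equal to $-1$. It also forces $\Gamma_2(x)$ to contain exactly $|\theta_2|-1$ nonzero vertices, all equal to $+1$. Then $S=\{x\}\cup\{z\in\Gamma_2(x) : v(z)=1\}$ and $T=\{y\sim x : v(y)=-1\}$ both have size $|\theta_2|$. From there your eigen-equation arguments go through unchanged: the coclique property, the complete join, condition~3, and the converse.
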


\begin{rmk}
 \label{Rmk:Hamming}
 The 2-dimensional Hamming graphs $H(2,t)$ have vertex set $\{1,\dots,t\}^2$, where $(i,j)$ and $(h,k)$ are adjacent if and only if their entries coincide in exactly one position.
 $H(2,t)$ is strongly regular with eigenvalues $2(t-1), t-2, -2$.
 For $i \neq i'$ and $j \neq j'$, define $S = \{(i,j),(i',j')\}$ and $T = \{(i',j),(i,j')\}$.
 Then $S$ and $T$ satisfy the conditions of Proposition \ref{Prop:WDB}, and clearly all pairs of sets $(S,T)$ satisfying these conditions are of this form.
 
 The hyperbolic graphs $\Gamma(\OO^+(4,q))$ and $VO^+(2,q)$ are isomorphic to the Hamming graphs $H(2,q+1)$ and $H(2,q)$ respectively.
 Thus, we have just offered a more straightforward description of the optimal eigenvectors for eigenvalue $-2$ of $\Gamma(\OO^+(4,q))$ and $VO^+(2,q)$ described in Theorems \ref{Thm:Main:Coll} and \ref{Thm:Main:Affine}.
\end{rmk}

\section{Proofs of the theorems}

In this section, we prove Theorems \ref{Thm:Main:Coll} and \ref{Thm:Main:Affine}.

\begin{proof}[Proof of Theorem \ref{Thm:Main:Coll}.]
First suppose that $v$ is a $\theta_2$-eigenvector of $A_\Gamma$ meeting the weight distribution bound.
By Propositions \ref{Prop:WDB} and \ref{Prop:Eigenvals Coll}, up to scalar multiple, $v$ is of the form $\chi_S - \chi_T$, where $S$ and $T$ are partial ovoids in $\Pi$ of size $q^{d+e-2}+1$, such that each point in $S$ is adjacent to each point in $T$.
We need to prove that $S$ and $T$ are as described in the theorem statement, and $\Pi$ belongs to $\Sp(4,q)$, $\OO^+(2d,q)$ with $d \in \{2,3,4\}$, or $\U(2d,q)$ with $d \in \{2,3\}$.

Define $\pi = \vspan S$.
Since every point in $T$ is collinear to every point in $S$, we have $T \subseteq \pi^\perp$.
We may assume without loss of generality that $\dim \vspan S \leq \dim \vspan T$.
Then $\dim \pi \leq \dim \pi^\perp = n -\dim \pi$, hence $\dim \pi \leq n/2$.
Now define the number $\alpha$ to be $0$ is $\Pi$ is orthogonal, $1/2$ if $\Pi$ is unitary, and $1$ if $\Pi$ is symplectic.
It follows from Table \ref{Tab:Pol} and Proposition \ref{Prop:Type preserving} that $n = 2d+e-\alpha$, and that $e \geq \alpha$ with equality if and only if $\perp$ is type-preserving.
This gives the inequality that
\begin{align*}
 \dim \pi \leq \floor{\frac n2} =  \floor{\frac{2d+e-\alpha}{2}} = d + \delta
 && \text{with} &&
 \delta = \begin{cases}
  1 & \text{if } \Pi = \OO^-(2d+2,q), \\
  0 & \text{otherwise}.
 \end{cases}
\end{align*}
Now suppose that the parameters of the polar space $\Pi_\pi$ are $(d',q,e';r)$.
Then $\dim \pi = 2d' + e' + r -\alpha$.
On the other hand, since $S$ is a partial ovoid in $\Pi_\pi$, we have $q^{d+e-2}+1 = |S| \leq q^{d'+e'-1}+1$, which yields the inequality $d+e-1 \leq d'+e'$.
Since $S$ cannot be empty, we also need that $d' \geq 1$.
Combined, this yields the following inequalities:
\begin{align*}
 \dim \pi \leq d + \delta, &&
 \dim \pi = 2d'+e'+r-\alpha, &&
 d+e-1 \leq d'+e', &&
 e \geq \alpha, &&
 d' \geq 1, &&
 r \geq 0.
\end{align*}
If $\delta = 1$, then we have $e=2$ and $\alpha=0$, and the set of inequalities cannot hold.
If $\delta=0$, this set of inequalities can only hold if we have equality everywhere.
This proves that $d'=1$, $r=0$, $e=\alpha$, hence $\perp$ must be type-preserving, that $\pi$ intersects $\Pi$ is a polar space with parameters $(1,q,e+d-2)$, and by looking at the size of $S$, we must have $S = \pi \cap \mp$.
Since $\perp$ is type-preserving, and $\dim \pi = n/2$, we have that $|T| \leq |\pi^\perp \cap \mp| = |\pi \cap \mp| = |S| = |T|$, hence $T = \pi^\perp \cap \mp$.
Therefore, $S$ and $T$ are as described in the theorem statement.
Moreover, since the family of polar spaces containing $\Pi$ must have a subfamily with parameter $e' = e+d-2$, it must hold that $d=2$ is $\Pi$ is symplectic, $d \in \{2,3,4\}$ if $\Pi$ is orthogonal, and $d \in \{2,3\}$ if $\Pi$ is unitary.

\bigskip

Now we prove the converse direction, that is, the construction from the theorem actually yields optimal eigenvectors.
The only thing we need to show is that if $P$ is a point in $\mp \setminus (S \cup T)$, then $|P^\perp \cap S| = |P^\perp \cap T|$.
Suppose that $P^\perp \cap \pi$ has parameters $(d',q,e';r')$ and $P^\perp \cap \pi^\perp$ has parameters $(d'',q,e'';r'')$.
By definition, $r'$ and $r''$ are the dimension of $P^\perp \cap \pi \cap \pi^\perp \cap \mp$, so we have $r'=r''$.
Moreover, we have $\dim \pi \cap P^\perp = d = 2d'+e'-\alpha-1$ and $\dim \pi^\perp \cap P^\perp = 2d''+e''-\alpha-1$.
Thus, it suffices to show that $e'=e''$.
Note that $\sigma = (\pi \cap P^\perp)^\perp = \vspan{\pi^\perp,P}$ has parameter $e'$ because $\perp$ is type-preserving.
The polar space on $\Pi_\sigma$ is non-degenerate, since its radical has to be a subset of $\pi \cap \pi^\perp \cap \mp = \varnothing$.

First consider the case where $\Pi_\sigma$ has a polarity $\tau$.
This happens unless $\Pi_\sigma$ is an orthogonal parabolic space $\OO(3,q)$ or $\OO(5,q)$ with $q$ even.
Then $P^\tau = P^\perp \cap \sigma$ is the tangent hyperplane through $P$ in $\sigma$, which must have parameters $(d'-1,q,e';1)$.
Then $\pi^\perp \cap P^\perp$ is a hyperplane in $P^\tau$ not containing $P$, and thus intersecting $\Pi$ in a polar space with parameters $(d'-1,q,e')$.
This proves that $\pi \cap P^\perp$ and $\pi^\perp \cap P^\perp$ have the same parameter $e'$.

Now consider the case where $\Pi_\sigma$ is an $\OO(3,q)$ or an $\OO(5,q)$.
Then $\Pi$ is $\OO^+(4,q)$ or $\OO^+(8,q)$, and $\pi$ and $\pi^\perp$ intersect $\Pi$ in a polar space of type $\OO^+(2,q)$ or $\OO^-(4,q)$ respectively.
Then $\pi$ and $\pi^\perp$ are disjoint, and hence $\pi \oplus \pi^\perp = \FF_q^{2d}$.
Thus, a coordinate vector of $P$ can be uniquely written as $x+y$ with $x$ a non-zero vector in $\pi$ and $y$ a non-zero vector in $\pi^\perp$.
Write $P_1 = \vspan x$ and $P_2 = \vspan y$ for the corresponding points.
Since $y \perp \pi$, we have that $\pi \cap P^\perp = \pi \cap \vspan{x+y}^\perp = \pi \cap \vspan{x}^\perp = \pi \cap P_1^\perp$ and similarly $\pi^\perp \cap P^\perp = \pi^\perp \cap P_2^\perp$.
Since $P_1^\perp \cap \pi$ is a hyperplane section of an $\OO^+(2,q)$ or an $\OO^-(4,q)$, either $P_1 \notin \mp$ and $P_1^\perp \cap \pi$ is of non-degenerate parabolic type, or $P_1 \in \mp$ and $P_1^\perp \cap \pi$ is a tangent hyperplane to $\OO^+(2,q)$ or $\OO^-(4,q)$.
The same holds for $P_2$ and $\pi^\perp$, so it suffices to show that $P_1 \in \mp$ if and only if $P_2 \in \mp$.
If $P_1 \in \mp$, then $P_2 \in \pi^\perp \subseteq P_1^\perp$, hence the whole line $\ell = \vspan{P_1,P_2}$ belongs to $P_1^\perp$.
Since this line contains $P^\perp$, $P_1$ and $P$ are collinear, and $\ell \in \ml$, which implies that $P_2 \in \ell$ must belong to $\mp$.
Analogously, if $P_2 \in \mp$, then $P_1 \in \mp$.
\end{proof}

The proof of Theorem \ref{Thm:Main:Affine} is very similar.

\begin{proof}[Proof of Theorem \ref{Thm:Main:Affine}]
Let $VO^\eps(2d+1-\eps,q)$ be the second subconstituent of $\Gamma(\OO^\eps(2d+3-\eps,q))$ with respect to $P$, and with $d\geq 1$.
Let $\perp$ denote the polarity of $\OO^\eps(2d+3-\eps,q) = (\mp,\ml)$.
If the weight distribution bound is tight for $\theta_2$, then $\OO^\eps(2d+3-\eps,q)$ has two partial ovoids $S$ and $T$ of size $|\theta_2|$ such that $\vspan S \perp \vspan T$ and $S \cap P^\perp = T \cap P^\perp = \varnothing$.
As before, we may assume that $\pi = \vspan S$ satisfies $\dim \pi \leq \dim \vspan T \leq \dim \pi^\perp$, and that $\pi$ has parameters $(d',q,e';r)$.
We must have $d' \geq 1$ in order for $|S| > 1$ to be possible.

\bigskip 

(1) For $VO^-(2d+2,q)$, we have $\theta_2 = -((q-1)q^d + 1)$.
Similar as before, we find $2d'+e'+r = \dim \pi \leq d+2$ and $q^d + 1 \leq (q-1)q^d+1 = |S| \leq q^{d'+e'-1}+1$.
This yields $d \leq d'+e'-1$ and hence $2d'+e'+r \leq d+2 \leq d'+e'+1$.
Since $d' \geq 1$, this can only hold if $S = \pi \cap \mp$, $d'=1$, $r=0$, $e' = d \in \{1,2\}$, and $(q-1)q^d = q^d$, that is $q=2$.

Consider the case $d=1$.
Then $\pi$ and $\pi^\perp$ both need to intersect $\OO^-(6,2)$ in an $\OO(3,2)$, we have $S = \pi \cap \mp$ and $T = \pi^\perp \cap \mp$, and $\pi$ and $\pi^\perp$ need to intersect $P^\perp$ in a line skew to $\mp$, that is a line of type $\OO^-(2,2)$.
However, if $P^\perp \cap \pi$ is of type $\OO^-(2,2)$, then $(P^\perp \cap \pi)^\perp = \vspan{P,\pi^\perp}$ is of type $\OO^+(4,2)$, see e.g.\ \cite[Table 1.4]{Hirschfeld:Thas}.
Then $P^\perp \cap \vspan{P,\pi^\perp}$ is the tangent hyperplane of $\OO^+(4,2)$ at $P$, and all lines in $P^\perp \cap \vspan{P,\pi^\perp}$, including $P^\perp \cap \pi^\perp$, are of type $\OO^+(2,2)$.
Thus, the weight distribution bound cannot be tight.

If $d=2$, then $\pi$ intersects $\OO^-(8,2)$ in an $\OO^-(4,2)$, and $S$ consists of all points of $\pi \cap \mp$.
However, the points of $\OO^-(4,2)$ intersect all planes of $\pi$, including $\pi \cap P^\perp$, hence $S \cap P^\perp = \varnothing$ is not possible.

\bigskip 

(2) For $VO^+(2d,q)$, we have $|S| = -\theta_2 = q^{d-1}+1$.
This matches the size of $S$ in the proof of Theorem \ref{Thm:Main:Coll} for $\OO^+(2d+2,q)$, thus it follows that $S$ and $T$ must be as described in Theorem \ref{Thm:Main:Affine} in $\OO^+(2d+2,q)$, and that $S$ and $T$ must be disjoint to $P^\perp$.
In particular, we must have $2d+2 \leq 8$, that is $d \in \{1,2,3\}$.

In case $d=1$, we can use Remark \ref{Rmk:Hamming} to see that the weight distribution bound is tight, and classify the optimal eigenvectors.

In case $d=2$, we must have that $\pi$ and $\pi^\perp$ intersect $\OO^+(6,q)$ in an $\OO(3,q)$, $S = \pi \cap \mp$ and $T = \pi^\perp \cap P^\perp$, and $S \cap \mp = T \cap P^\perp = \varnothing$.
We proved in Theorem \ref{Thm:Main:Coll} that every point $R \notin (S \cup T)$ has the same number of neighbours in $S$ as in $T$, thus $S$ and $T$ still satisfy the condition of Proposition \ref{Prop:WDB} applied to $VO^+(4,q)$.

In case $d=3$, $\pi$ intersects $\OO^+(8,q)$ in an $\OO^-(4,q)$, and $S$ consists of all points of $\pi \cap \mp$.
However, the points of $\OO^-(4,q)$ intersect all planes of $\pi$, including $\pi \cap P^\perp$, hence $S \cap P^\perp = \varnothing$ is not possible.
\end{proof}

\section*{Acknowledgements}

Sam Adriaensen is supported by grant 12A3Y25N of Research Foundation - Flanders (FWO).

\section*{Declaration of A.I.\ use}

None of the mathematical ideas in this paper were obtained with the help of A.I.

\bibliographystyle{alpha}
\bibliography{ref}

\end{document}